\title{Solvability of equations in elementary functions}
\author{Alexey Kanel-Belov, Alexey Malistov, Rodion Zaytsev (SZU, MIPT, HSE)}
\date{October 2019}
\newtheorem{lemma}{Lemma}[section]
\newtheorem*{remark}{Remark}
\newtheorem{corollary}{Corollary}[section]
\begin{document}

\maketitle

\begin{abstract}
  We prove that the equation $\tan(x) - x = a$ is unsolvable in elementary
functions
\end{abstract}

\section{Introduction}
This article investigates the solvability of the equation $\tan(x) - x = a$ in elementary
functions. In other words, if we start off with functions $e^a$ and $\ln(a)$ along with the identity function, and apply a finite number of arithmetic operations as well as composition, is it possible to get the reverse map of the equation (i.e. $f(a)$ such that $\tan(f(a))-f(a) = a$ for any a)?
Note that all the trigonometric functions as well as powers and their reverse maps can be expressed in terms of the exponent and logarithm, which is why we only need to consider the last two.

The paper was supported by the Russian Science Foundation under grant 17-11-01377.

\section{Topological version of Abel-Ruffini theory}
In this section, some notions from Abel-Ruffini theory will be adopted.

\subsection{General notions}
Suppose we are trying to solve an equation $f(x) = a$ in the complex plane. If $f(x)$ is holomorphic and non-constant, then, by the well-known uniquenuess principle, for any $a$, the set of roots is discrete. In other words, we have several branches of the reverse map. Let now $a$ draw a closed curve in the complex plane. Then the roots also draw curves which, however, aren't necessarily closed. By a well known theorem from complex analysis, if the derivative of a holomorphic function is not zero, then it is a bijection in some neighborhood. Let's call the points where $f'(x) = 0$ for some root critical. Let's only consider curves, without critical points. Then the roots will not will not merge, so that the permutation of roots after $a$ completes the curve is well defined.

\subsection{Induced permutations}
If we change the curve very slightly, and there aren't any critical points very close, the trajectories of the roots will not change much, and, in particular, when $a$ comes back, the permutation of roots will be identical. Therefore, if the area enclosed by the curve doesn't contain any critical points, then we can pull the curve to a constant map, so that none of the roots will have permuted. Therefore, if we want to induce a nontrivial permutation of roots, it suffices to consider curves which move up to a critical point, make a circle around it and then come back. Then, multiplication of these curves (as defined in algebraic topology) will give all achievable permutations of roots.

\subsection{Permutations induced by elementary functions}

\begin{lemma}
	The group induced by elementary functions is cyclic
\end{lemma}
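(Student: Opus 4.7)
The plan is to prove the claim by induction on the syntactic construction of the elementary function $f$, viewing $f$ as generated by the base set $\{\operatorname{id}, \exp, \ln\}$ under the arithmetic operations and composition. At each stage I will track the permutation induced on the branches of $f^{-1}$ by a small positively oriented loop encircling a single critical value, and verify that such a loop generates a cyclic subgroup of the monodromy group described in the previous subsection.

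The base cases are essentially immediate. The identity map has no critical values and contributes nothing; $\exp$ has no finite critical values, so no nontrivial small loops exist; and $\ln$ has a single critical value at $0$, with one loop shifting all branches by $2\pi i$, yielding an infinite cyclic action. For the inductive step applied to arithmetic operations $f = f_1 \ast f_2$, I would argue that any critical value of $f$ is forced either by a critical configuration of one of the $f_i$ or by a vanishing denominator in the case of division, and that a small loop around such a value activates only the local monodromy of the relevant component. Since that monodromy is cyclic by induction, and the arithmetic acts as a fixed (locally biholomorphic) reindexing of branches, the induced permutation on $f^{-1}$ remains cyclic.

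The composition case $h = f_1 \circ f_2$ is the main obstacle. Branches of $h^{-1}$ are naturally labelled by pairs consisting of a branch of $f_1^{-1}$ and a compatible branch of $f_2^{-1}$, and critical values of $h$ arise either from critical values of $f_1$ (after lifting through $f_2^{-1}$) or from points where $f_2$ crosses a critical value of $f_1$. In either case, I would show that a sufficiently small loop in the target only encircles one such singularity and thus only activates a single level of branching, so the induced permutation factors through a single cyclic generator supplied by one of the inductive hypotheses.

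The delicate point, where I expect most of the work to sit, is verifying that a single loop cannot simultaneously activate two independent branchings: if two critical fibres of $h$ happened to coalesce, one could imagine obtaining a product of two independent cyclic shifts, which need not be cyclic. Ruling this out amounts to a careful local analysis near each critical value, showing that only one layer of the elementary tower is ever ``live'' in a small enough punctured neighbourhood, so that the full local monodromy of $h^{-1}$ is generated by a single permutation and hence cyclic.
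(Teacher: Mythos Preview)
You have misread the scope of the lemma. Here ``elementary functions'' means only the two primitives $\exp$ and $\ln$, not arbitrary expressions built from them; the paper's proof is literally one sentence per primitive ($e^a$ is single-valued, so its monodromy is trivial, while a loop around $0$ shifts every branch of $\ln a$ by $2\pi i$, giving the infinite cyclic group $\mathbb{Z}$). Arithmetic combinations and compositions are handled separately in the corollary that follows, and there the conclusion is deliberately weakened from \emph{cyclic} to merely \emph{solvable}. If the present lemma already covered all compositions, that corollary would have nothing to do.

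Your inductive programme targets a statement that is either vacuous or false, depending on how it is read. If you are tracking the \emph{local} monodromy around a single critical value, that group is generated by one loop and is cyclic by definition; no induction, and none of the ``delicate'' coalescence analysis you flag, is needed. If instead you mean the \emph{global} monodromy group of an arbitrary elementary expression, the claim already fails at the arithmetic step: the four-valued function
\[
\sqrt{a}+\sqrt{a-1}=\exp\!\bigl(\tfrac{1}{2}\ln a\bigr)+\exp\!\bigl(\tfrac{1}{2}\ln(a-1)\bigr)
\]
has branch points at $0$ and $1$, and the loops around them act as two independent sign changes, so the full monodromy group is the Klein four-group $\mathbb{Z}/2\times\mathbb{Z}/2$, which is abelian but not cyclic. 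Your proposed inductive step for $f_1+f_2$ cannot recover cyclicity here. The paper avoids this entirely by asserting cyclicity only for $\exp$ and $\ln$ themselves and then arguing solvability, not cyclicity, for the tower.
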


\begin{proof}
Let's calculate the permutations achievable by the logarithm and exponent. Since the exponent is a univalent function, when $a$ moves along a closed path, so does $e^a$. Logarithm, on the other hand, adds $2\pi i$ times the index of the curve.
\end{proof}

\begin{corollary}
	The group induced by elementary functions is abelian
\end{corollary}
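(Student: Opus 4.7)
The plan is to derive the corollary as an immediate consequence of the preceding lemma via the standard group-theoretic fact that every cyclic group is abelian. Concretely, let $G$ denote the monodromy group induced by elementary functions. The lemma supplies a generator $\sigma$ such that every element of $G$ has the form $\sigma^n$ for some $n \in \mathbb{Z}$. For any two elements $\sigma^m, \sigma^n \in G$ one computes $\sigma^m \sigma^n = \sigma^{m+n} = \sigma^n \sigma^m$, so $G$ is commutative.

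There is no real obstacle here: the entire content of the corollary is the trivial implication \emph{cyclic} $\Rightarrow$ \emph{abelian}, and the only step is to invoke the lemma. The only thing worth flagging for downstream use is that the lemma in fact gives the slightly stronger information that the generator comes from winding around the critical locus of the logarithm; if later arguments (e.g. the unsolvability proof for $\tan(x)-x=a$) need the group to be infinite cyclic or to act by a specific shift, that structure must be read off from the lemma's proof rather than from the corollary itself.
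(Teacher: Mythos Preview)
Your argument is correct and matches the paper's approach: the corollary is stated immediately after the lemma asserting cyclicity, with no separate proof given, so the intended reasoning is precisely the trivial implication \emph{cyclic} $\Rightarrow$ \emph{abelian} that you spell out. Your additional commentary about what the corollary does or does not record is harmless and accurate.
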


\begin{corollary}
	For any $N$, the group induced by elementary functions and their compositions of depth not exceeding $N$ is solvable
\end{corollary}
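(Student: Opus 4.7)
The plan is induction on $N$. The base case $N=1$ is exactly the preceding Lemma: the monodromy group is cyclic (generated by the shift $z\mapsto z+2\pi i$ coming from $\ln$), and cyclic groups are solvable. For the inductive step, I assume that every elementary function of composition depth at most $N{-}1$ has solvable monodromy group, and prove the same for depth $N$.

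Any depth-$N$ elementary function is obtained from depth-$(N{-}1)$ pieces by arithmetic operations or by one further composition, so it suffices to check that each of these two constructions preserves solvability of the monodromy group. For an arithmetic combination $g\pm h$, $gh$, or $g/h$, a branch over a regular base point $a$ is specified by a branch of $g$ together with a branch of $h$; continuation along a loop in the base acts on the two choices independently, so the monodromy of the combined function embeds into $G_g\times G_h$, and solvability is inherited because it passes to subgroups of direct products of solvable groups. For a composition $f=g\circ h$, a branch of $f$ over $a$ is a pair $(i,j)$ where $i$ labels a branch $h_i$ of $h$ at $a$ and $j$ labels a branch of $g$ at $h_i(a)$; a loop $\gamma$ in the base first permutes the $i$'s through $G_h$, and above each translated point the index $j$ is then permuted through a copy of $G_g$. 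This exhibits the monodromy of $f$ as a subgroup of the wreath product $G_g\wr G_h$, which is solvable whenever both $G_g$ and $G_h$ are (its derived series descends through the top factor into the base group, both solvable by hypothesis).

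The main technical point will be this composition step. To make the wreath-product picture precise one must pick a base point and a system of loops avoiding not only the critical values of $h$ but also the $h$-preimages of the critical values of $g$, and then verify carefully that the induced permutations of the full fiber really do lie inside $G_g\wr G_h$ rather than merely in the full symmetric group on pairs $(i,j)$. Once that identification is in place, the routine closure properties of solvable groups under subgroups, quotients, direct products, and wreath products close the induction and give Corollary~2.
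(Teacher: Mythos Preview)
Your argument is correct and is the standard structural route through direct and wreath products; it is, however, organized differently from the paper. The paper does not separate out arithmetic operations and never names wreath products: it argues directly on commutators, observing that a loop which already acts trivially on the inner branches can only act \emph{abelianly} after one further application of $\exp$ or $\ln$ (this is precisely Corollary~2.1), so that taking commutators kills one layer of depth and hence the $N$-th derived subgroup of the full monodromy is trivial. In effect the paper bounds the derived length by $N$ in one stroke, while you obtain solvability from closure of the class of solvable groups under subgroups, direct products, and wreath products. Your version is more explicit---in particular it treats arithmetic combinations, which the paper's proof leaves implicit---and would work verbatim if the primitive building blocks had merely solvable rather than abelian monodromy; the paper's version is terser and gives the sharper conclusion that the derived length does not exceed the composition depth. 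One small technical caveat: in both your arithmetic and composition steps the monodromy of the combined function is in general only a \emph{subquotient} of $G_g\times G_h$ (resp.\ $G_g\wr G_h$), not literally a subgroup, since distinct pairs of branches can determine the same analytic germ; this does not affect the solvability conclusion.
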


\begin{proof}
	The commutant of an abelian group is trivial and elementary functions induce an abelian group from the identity curves (i.e. curves where each root draws a closed path). Hence, if we take the identity curves and apply elementary functions to them we will get an abelian group (this follows from the previous corollary). Then, taking its commutant, we get the trivial group again. Applying elementary funcionts to it can only generate an abelian group, so the group generated by functions of depth not greater than two is solvable. Doing this procedure $N$ times (or reasoning by induction) proves the corollary.
\end{proof}

The last corollary means that if we can choose some curves, such that the group generated by their induced permutations is unsolvable, we will have proved the unsolvability in elementary functions.
\begin{remark}

	Calculating the group induced by a given set of curves is rather difficult. This could be automated and done by a computer. Also, if the number of critical points is finite, there is only a finite number of curves, and therefore the whole proof could be done by a machine.
\end{remark}

\begin{lemma}
	The complete group of permutations acts transitively on roots
\end{lemma}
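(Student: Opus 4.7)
The plan is to identify the ``complete group of permutations'' with the monodromy group of the multivalued inverse of $f(x)=\tan(x)-x$ on the base $\mathbb{C}\setminus\Sigma$, where $\Sigma$ is the set of critical values, and to deduce transitivity from connectedness of the total space. First I would compute $f'(x)=\sec^2(x)-1=\tan^2(x)$, which vanishes precisely at $x=k\pi$, $k\in\mathbb{Z}$; the critical values are therefore $\Sigma=\{-k\pi:k\in\mathbb{Z}\}$, and $f$ is holomorphic on $U=\mathbb{C}\setminus\{(k+\tfrac12)\pi:k\in\mathbb{Z}\}$.

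Next, I would consider the graph $\Gamma=\{(x,a)\in U\times\mathbb{C}:f(x)=a\}$, which the first projection identifies homeomorphically with $U$ itself. The second projection realizes $\Gamma\to\mathbb{C}$ as a branched cover ramified over $\Sigma$; removing these branch points from both source and target yields an unramified covering $\Gamma'\to\mathbb{C}\setminus\Sigma$ whose fiber over a chosen regular value $a_0$ is in natural bijection with the roots of $f(x)=a_0$. The action of $\pi_1(\mathbb{C}\setminus\Sigma,a_0)$ on this fiber, obtained by lifting loops as described in the previous section, is precisely the ``complete group of permutations'' of the lemma, and by elementary covering-space theory this action is transitive if and only if $\Gamma'$ is connected.

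Finally I would observe that $\Gamma'$ is homeomorphic to $U$ with the critical points $\{k\pi:k\in\mathbb{Z}\}$ removed, i.e.\ to $\mathbb{C}\setminus\{k\pi/2:k\in\mathbb{Z}\}$, which is path-connected as the complement of a discrete subset of the plane. Hence the monodromy group acts transitively on roots.

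I do not foresee any substantive obstacle. The argument reduces a statement about permutations of a countable family of branches to the topological fact that $\mathbb{C}$ minus a discrete set remains connected; the only care required is notational, in matching up the three equivalent pictures (roots of $f(x)=a_0$, sheets of the multivalued $f^{-1}$, and fibers of the unramified cover $\Gamma'\to\mathbb{C}\setminus\Sigma$). In particular, given two roots $x_i,x_j$ at $a_0$, a path in $\Gamma'$ joining them (perturbed slightly so its projection avoids $\Sigma$) projects to a loop in the $a$-plane whose monodromy takes $x_i$ to $x_j$.
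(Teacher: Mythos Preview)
Your proposal is correct and is essentially the paper's own argument, recast in covering-space language: the paper simply picks a path in the $x$-plane joining two given roots (avoiding the discrete set of critical points) and observes that its image $a(t)=f(x(t))$ is a closed loop whose monodromy carries one root to the other---exactly what you spell out in your final paragraph. Note only that the lemma sits in the general Section~2, so your explicit computation of the critical set for $f(x)=\tan(x)-x$ is unnecessary; the connectedness argument works verbatim for any nonconstant holomorphic $f$, since the critical set is then automatically discrete.
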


\begin{proof}
If we take any root and move it along any path that connects it with another root, $a(t) = f(x(t))$ will draw a closed curve, because both $x(0)$ and $x(1)$ are roots for the same $a$ (this path, however, must avoid points where $f(x)$ is critical, but since this set is discrete (otherwise $f$ is constant), it is always possible).
\end{proof}

\section{Main result}

In this section, we will investigate the solvability of $\tan(x) - x = a$.

\begin{lemma}
	The critical points of equation $\tan(x) - x = a$ $\pi k, k \in \mathbb{Z}$
\end{lemma}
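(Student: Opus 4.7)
The plan is short and direct: compute $f'(x)$ where $f(x) = \tan(x) - x$ and locate its zeros. I would write $f'(x) = \sec^2(x) - 1$, and then use the identity $\sec^2(x) - 1 = \tan^2(x)$ to rewrite the derivative in a form whose zero locus is easy to read off. A point $x_0$ is critical in the sense of the earlier section exactly when $f'(x_0) = 0$, so the task reduces to solving $\tan^2(x) = 0$, i.e.\ $\sin(x) = 0$. Over $\mathbb{C}$ this gives $x = \pi k$ for $k \in \mathbb{Z}$, as claimed.

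Before concluding I would briefly verify the two things that could go wrong. First, at $x = \pi k$ we have $\cos(x) = \pm 1 \neq 0$, so $\tan(x)$ is genuinely holomorphic there and the zero of $f'$ is honest (not an artifact of the pole/zero cancellation in $\sec^2 - 1$). Second, one might worry about the poles $x = \pi/2 + \pi k$ of $\tan(x)$; these are singularities of $f$ itself rather than points where $f'$ vanishes on its domain of holomorphy, so they are excluded from the complex plane on which we do monodromy and therefore do not count as critical points in the sense of the preceding subsection. (In practice, one would simply puncture $\mathbb{C}$ at these poles before running the argument of the earlier lemmas.)

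The hardest part is really just notational: making sure the identity $\sec^2(x) - 1 = \tan^2(x)$ is justified away from poles of $\tan$, and clarifying that poles of $\tan$ play no role in the statement. No deeper obstacle is present; this lemma is a one-line differentiation whose purpose is to pin down, for the monodromy analysis of $\tan(x) - x = a$ in the next step, the finite (locally) set of punctures around which one must run loops to generate the permutation group on the roots.
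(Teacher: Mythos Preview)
Your proposal is correct and follows essentially the same approach as the paper: both compute $f'(x)=\sec^2 x-1$ and locate its zeros, the paper via $\cos^2 x=1$ and you via the equivalent identity $\sec^2 x-1=\tan^2 x$. The only step you omit is the paper's final line $\tan(\pi k)-\pi k=-\pi k$, which computes the corresponding critical \emph{values} in the $a$-plane (these are what the loops in the monodromy argument actually encircle); since $\{-\pi k:k\in\mathbb{Z}\}=\{\pi k:k\in\mathbb{Z}\}$, the stated set is unchanged, but you may want to include that evaluation so the lemma matches the paper's convention from \S2.1 that ``critical points'' live on the $a$-side.
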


\begin{proof}
$$(\tan(x) - x)' = \frac{1}{\cos{}^2(x)} - 1 = 0 \Leftrightarrow \cos{}^2(x) = 1 \Leftrightarrow
x \in \pi k, k \in \mathbb{Z}$$
But $\tan(\pi k) - \pi k = -\pi k$ so the critical points for this equation are exactly
$\pi k, k \in \mathbb{Z}$.
\end{proof}

\begin{lemma}
	These critical points are  of third order
\end{lemma}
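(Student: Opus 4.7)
The plan is to expand $\tan(x)-x$ in a Taylor series around an arbitrary critical point $x=\pi k$ and read off the order of vanishing of $f(x)-f(\pi k)$. By "critical point of third order" I take the paper to mean that the function has local degree $3$ there, i.e.\ $f(x)-f(\pi k)$ has a zero of multiplicity exactly $3$ at $x=\pi k$, so that a small loop around the critical value $-\pi k$ cyclically permutes the three roots that coalesce at $\pi k$.

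First I would exploit the $\pi$-periodicity of $\tan$ to reduce to the case $k=0$: setting $y=x-\pi k$ gives $\tan(x)-x=\tan(y)-y-\pi k$, so that
\[
f(x)-f(\pi k)=\tan(y)-y.
\]
This removes the dependence on $k$ entirely, so it suffices to analyze $\tan(y)-y$ near $y=0$.

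Next I would invoke the standard Taylor expansion
\[
\tan(y)=y+\frac{y^{3}}{3}+\frac{2y^{5}}{15}+O(y^{7}),
\]
which yields $\tan(y)-y=\tfrac{1}{3}y^{3}+O(y^{5})=y^{3}\bigl(\tfrac{1}{3}+O(y^{2})\bigr)$. Since the bracketed factor is nonzero at $y=0$, we have $f(x)-f(\pi k)=(x-\pi k)^{3}\cdot g(x)$ with $g$ holomorphic and $g(\pi k)\neq 0$. Hence $f$ has a zero of order exactly $3$ at each $\pi k$, which is the claim.

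The only mild subtlety is the convention: I would add one sentence noting that this algebraic order-$3$ vanishing translates, via the local model $z\mapsto z^{3}$, into the topological statement needed for the group-theoretic argument—namely that a small loop in the $a$-plane around $-\pi k$ induces a $3$-cycle on the three sheets meeting at $\pi k$. No real obstacle is expected; the computation is a one-line Taylor expansion combined with periodicity.
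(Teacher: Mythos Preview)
Your proposal is correct and follows essentially the same approach as the paper: both arguments amount to showing that the Taylor expansion of $f(x)-f(\pi k)$ at $x=\pi k$ begins with a nonzero cubic term. The paper does this by computing $f''$ and $f'''$ directly and evaluating at $\pi k$, whereas you first use the $\pi$-periodicity of $\tan$ to reduce to $k=0$ and then quote the standard series $\tan y = y + \tfrac{1}{3}y^3 + O(y^5)$; this is a cosmetic difference rather than a different method.
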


\begin{proof}
Taking the second derivative, we obtain $\frac{2\sin(x)}{\cos{}^3(x)}$ which also vanishes at critical points. The third derivative, however, doesn't:
$$\frac{2}{\cos{}^2(x)} + \frac{6\sin{}^2(x)}{\cos{}^4} = 1 \text{ if } x = \pi k, k \in \mathbb{Z}$$
\end{proof}

\begin{corollary}
Thus, as it is known from complex analysis, if $a$ makes a loop around any of these critical points, three roots will permute cyclically (and in the same direction as $a$, i.e. if $a$ rotates clockwise, so will the roots).
\end{corollary}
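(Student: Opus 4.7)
The plan is to reduce to a cubic normal form at the critical point and read the monodromy directly. Fix $x_0 = \pi k$ with critical value $a_0 = -\pi k$. By the two preceding lemmas, $(\tan x - x)'(x_0) = (\tan x - x)''(x_0) = 0$ and $(\tan x - x)'''(x_0) \neq 0$, so Taylor's theorem yields
$$\tan x - x - a_0 = c(x-x_0)^3\bigl(1 + h(x)\bigr), \qquad c \neq 0,\ h(x_0) = 0,$$
with $c$ a nonzero constant and $h$ holomorphic in a neighborhood of $x_0$.

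First I would straighten this equation. Since $1 + h$ is nonvanishing near $x_0$, it admits a holomorphic cube root, so the map $w := (x-x_0)(1+h(x))^{1/3}$ is a biholomorphism from a neighborhood of $x_0$ onto a neighborhood of $0$, and the equation $\tan x - x = a$ becomes
$$c\, w^3 = a - a_0.$$
For each sufficiently small nonzero $a - a_0$, this has exactly three local solutions $w_j$ related by the cube roots of unity.

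Next I would trace the monodromy directly in the normal-form coordinate. Let $a$ traverse a small counterclockwise loop $a - a_0 = \varepsilon e^{i\theta}$, $\theta : 0 \to 2\pi$. Then $w_j(\theta) = (\varepsilon/c)^{1/3}\, e^{i(\theta+2\pi j)/3}$, so when $\theta$ reaches $2\pi$ each branch $w_j$ has arrived at the former location of $w_{j+1\bmod 3}$. This is the desired cyclic $3$-cycle, and because each $w_j$ rotates in the same angular sense as $a$ (by exactly one-third of the angle), reversing the orientation of the loop reverses the cycle; this establishes the direction claim.

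The main bookkeeping step, and the one mild obstacle, is matching these three local branches to three \emph{global} roots of $\tan x - x = a$. I would dispose of it by taking $\varepsilon$ small enough that (i) the three local branches remain inside the normal-form neighborhood for all $\theta$, and (ii) the loop in the $a$-plane encloses no other critical value $-\pi m$ with $m \neq k$. Condition (ii) ensures that every \emph{other} global root moves along a small closed curve around its own starting point and thus returns to itself, so the only permutation produced is the local cyclic $3$-cycle computed above.
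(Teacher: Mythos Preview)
Your argument is correct; it is precisely the standard local normal-form computation that the paper invokes without proof when it writes ``as it is known from complex analysis.'' The paper gives no argument of its own for this corollary, so there is nothing further to compare---you have simply supplied the details the authors chose to omit.
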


Note that when $a$ comes up to a critical point via real axis, there is only one real root close to the point where the derivative vanishes (because, as can be seen from figure \ref{real_roots} on page \pageref{real_roots}, real roots never merge on the real axis), so that we have one real and non-real roots that permute cyclically. It would therefore be very convenient if there were only two non-real roots in the complex plain altogether. Fortunately, as it turns out, that is the case, the proof of which follows.

\begin{lemma}
$$\tan(z) - z = 0 \text{ has only real roots}$$
\end{lemma}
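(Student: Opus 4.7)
My plan is to write $z = x+iy$, split the equation $\sin z = z\cos z$ into its real and imaginary parts in a clean way, and show that the two resulting real equations are incompatible unless $y=0$. The standard device for extracting the real and imaginary parts of a product involving $\sin z$ and $\cos z$ is to multiply through by $\overline{\cos z}$; this works provided $\cos z \ne 0$, and this side-condition is easy to dispose of first, because if $\cos z = 0$ the original equation $\sin z = z\cos z$ forces $\sin z = 0$ as well, which is impossible at the same point.

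Assuming $\cos z \ne 0$, I would multiply $\sin z = z\cos z$ by $\overline{\cos z}$ to obtain $\sin z\,\overline{\cos z} = z\,|\cos z|^2$. Using the addition formulas $\sin(x+iy) = \sin x\cosh y + i\cos x\sinh y$ and $\cos(x+iy) = \cos x\cosh y - i\sin x\sinh y$, a routine expansion (the $\sinh^2 y$ and $\cosh^2 y$ cross-terms collapse via $\cosh^2 y - \sinh^2 y = 1$ and $\sin^2 x + \cos^2 x = 1$) gives the clean identity
\[
\sin z\,\overline{\cos z} \;=\; \tfrac{1}{2}\sin 2x + \tfrac{i}{2}\sinh 2y.
\]
Comparing real and imaginary parts with $z|\cos z|^2 = (x+iy)|\cos z|^2$ yields the two real equations
\[
\sin 2x = 2x\,|\cos z|^2, \qquad \sinh 2y = 2y\,|\cos z|^2.
\]

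Now I would argue by contradiction: suppose $y \ne 0$. If in addition $x \ne 0$, dividing the two equations eliminates $|\cos z|^2$ and gives $\dfrac{\sin 2x}{2x} = \dfrac{\sinh 2y}{2y}$. The right-hand side is strictly greater than $1$ for any real $y \ne 0$ (from the Taylor series of $\sinh$, all coefficients positive), whereas the left-hand side satisfies $\bigl|\sin 2x / 2x\bigr| \le 1$; this is the contradiction. The remaining case $x=0$, $y \ne 0$ reduces the second equation to $\sinh y\cosh y = y\cosh^2 y$, i.e.\ $\tanh y = y$, which fails for $y \ne 0$ since $\tanh$ is strictly concave on $(0,\infty)$ with $\tanh'(0)=1$. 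Hence $y=0$, so every root is real.

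I do not expect a serious obstacle: the whole proof hinges on spotting the right manipulation (multiplying by $\overline{\cos z}$) so that the real and imaginary parts separate into the elegant pair $\sin 2x$ and $\sinh 2y$; once that identity is in hand, the inequalities $\sinh u / u > 1 > \sin u / u$ for $u \ne 0$ close the argument immediately. The only subtleties to check carefully are the side condition $\cos z \ne 0$ and the $x=0$ degeneracy, both of which are one-line verifications.
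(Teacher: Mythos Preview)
Your proof is correct and takes a genuinely different route from the paper's.

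The paper argues by \emph{counting}: it rewrites the equation as $\sin z - z\cos z = 0$, then applies Rouch\'e's theorem on the rectangle with corners $\pm\pi k \pm iM$, comparing $f(z) = -z\cos z$ with $g(z)=\sin z$. This shows $\sin z - z\cos z$ has exactly $2k+1$ zeros in the rectangle (the same as $z\cos z$), and then one checks that this many zeros are already visible on the real axis (the triple zero at $0$ and one simple real zero in each interval $(\pi m - \pi/2,\pi m + \pi/2)$, $0<|m|<k$). Hence there is no room for non-real zeros.

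Your approach, by contrast, is a direct elementary argument: the multiplication by $\overline{\cos z}$ neatly separates the equation into the pair $\sin 2x = 2x\,|\cos z|^2$, $\sinh 2y = 2y\,|\cos z|^2$, and then the inequalities $\sinh u/u > 1 > \sin u/u$ for $u\ne 0$ immediately rule out $y\ne 0$. This avoids Rouch\'e entirely and requires no knowledge of how many real roots there are; it is shorter and more self-contained. The paper's method has the (minor) side benefit that it explicitly counts the roots in each rectangle, but for the bare statement of the lemma your argument is cleaner.
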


\begin{proof}
	$$\tan(z) - z = 0 \Leftrightarrow \sin(z) - z\cos(z) = 0 \text{ because }
	\cos(z) = 0 \Rightarrow \sin(z) \neq 0$$ so it can't be a solution, so we can divide the last equation by  $\cos(z)$.
	Let $z = x + iy$, then $$\sin(x + iy) = \sin(x)\cosh(y) + i\cos(x)\sinh(y)$$ so $$|\sin(x + iy)|^2 = \sin^2(x)\cosh^2(y) + \cos^2(x)\sinh^2(y) = \sin^2(x) + \sinh^2(y)$$
	Similarly, $$|\cos(x + iy)|^2 = \cos^2(x)\cosh^2(y) + \sin^2(x)\sinh^2(y) =
	\cos^2(x) + \sinh^2(y)$$

	Recall now Rouché's theorem: if two holomorphic functions $f$ and $g$, holomorphic in some region, are such, that on the boundary of that region $|g(z)| < |f(z)|$, then in that region equations $f(z) = 0$ and $f(z) + g(z) = 0$ have the same number of solutions. Setting $$f(z) = -z\cos(z), g(z) = \sin(z)$$
	and for the region choosing a rectangle with center at zero and right upper corner at
	$$\pi k + iM, k \in \mathbb{N}, M \in \mathbb{R}$$
	the requirements of the theorem are satisfied (if $M$ is large enough).
	Indeed, on the edges on the left and on the right, we have

	\begin{multline}
	|g(z)|^2 = \sin^2(\pi k) + \sinh^2(y) = \sinh^2(y) < \\
	< 1 + \sinh^2(y) = \cos^2(\pi k) + \sinh^2(y) \leq |z\cos(z)|^2 = |f(z)|^2
	\end{multline}

	since
	$$|z| \geq Re \text{ } z = \pi$$
	On the top and on the bottom sides,
	$$|g(z)|^2 \leq 1 + \sinh^2(M)$$
 and
	$$|f(z)|^2 \geq |z|^2\sinh^2(M) \geq M^2\sinh^2(M) > 1 + \sinh^2(M)$$
	for sufficienly large $M$ (we won't need the exact value anyway).
	Hence Rouché's theorem holds and so $tan(z) - z$ has $2k + 1$ roots, because $z\cos(z)$ has roots
	$$\{0,\pm \pi /2, \ldots , \pm (\pi k -  \pi /2) \} $$

	But we know $2k + 1$ roots already: $0$ is a critical point, hence a root of third order, and also we have the intersections of the graph $y = \tan(x)$ with $y = x$ in intervals $(\pi m - \pi /2, \pi m + \pi /2)$, $-k < m < k$, $m \neq 0$, $m \in \mathbb{Z}$ (see figure \ref{real_roots} on page \pageref{real_roots} to visualize this picture).
Therefore, there are no non-real roots, because the rectangle can be chosen as large as needs be.
\end{proof}

We are now ready to prove our main result.
Consider now the equation $\tan(z) - z = a$. If we tend $a$ to zero, we know that we will get three roots merging also in zero.

\begin{center}
	\includegraphics[width=\textwidth]{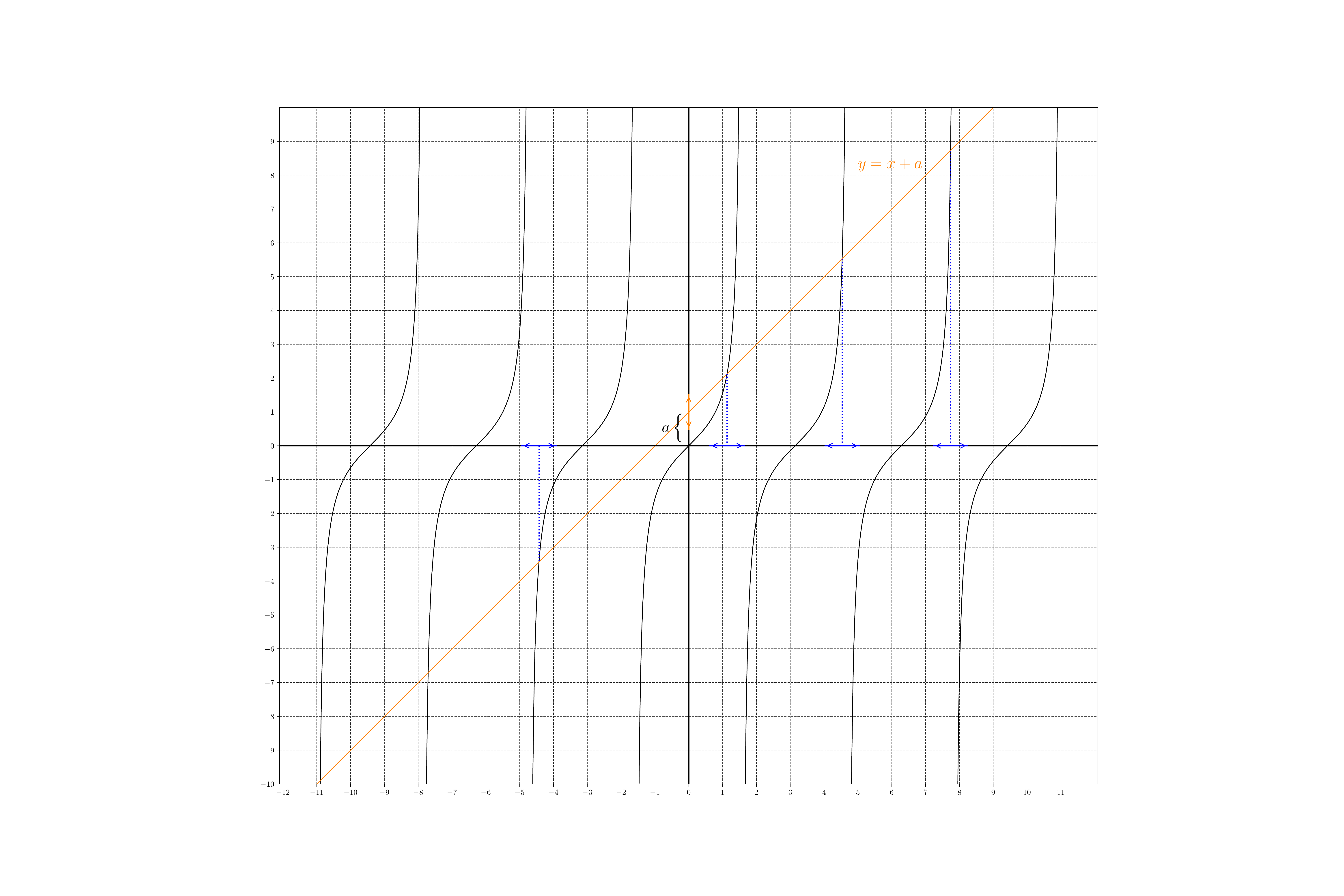}
	\captionof{figure}{Real roots}
	\label{real_roots}
\end{center}

Hence, if $a$ is real and small, but not zero, there will be two non-real roots and one real (in small neighbourhood of zero). Moreover, since the Taylor series for $\tan(x)$ has only real coefficients, the non-real roots will be conjugate. This situation will not change if we move $a$ along the real axis, until we reach a critical point. Then, if we move $a$ around the critical point along the arch of a tiny semicircle, that lies below the real axis, $a$ will rotate about the critical point $180$ degrees anticlockwise. Therefore the roots will permute in such a way that will send the real root either above or below the real axis (strictly speaking this is not yet a permutation, because $a$ has not completed a loop). If the real root has gone below the real axis, then choose the opposite semicircle instead, i.e. the one above the real axis. Then the real root will go above the real axis. Thus, the root above the real axis must have become real, because as $a$ was complex, so must have been the roots. That is, none of them could have crossed the real axis. Therefore, since in the end we have to end up with only one root in each semiplane, the upper being occupied by the root that was real in the beginning, we must have that the one that was in the upper semiplane is now real. The one in the bottom plane has therefore remained there. This procedure could be thought of as simply swapping the real root with the one in the upper semiplane.
Let's now fix a point, say $-1$, move $a$ there, and name the two complex roots $c_1$ and $c_2$, $c_1$ being the one in the upper semiplane, and $c_2$  - its complex conjugate. Name the real roots naturally, starting from zero (i.e. call the real root, that will tend to zero as $a$ does $z_0$, the one to the right $z_1$ and so on). In the beginning, the real roots, starting from the first, are arranged as such: $(z_1, z_2, \ldots)$. Consider now a path starting from $-1$, that goes along the real axis to the left, and as it comes close to a critical point, avoids it along a small semicircle below or above the real axis (choose the one that swaps the real root with the upper one).

\begin{figure}[H]
	\centering
	\includegraphics[width=10cm]{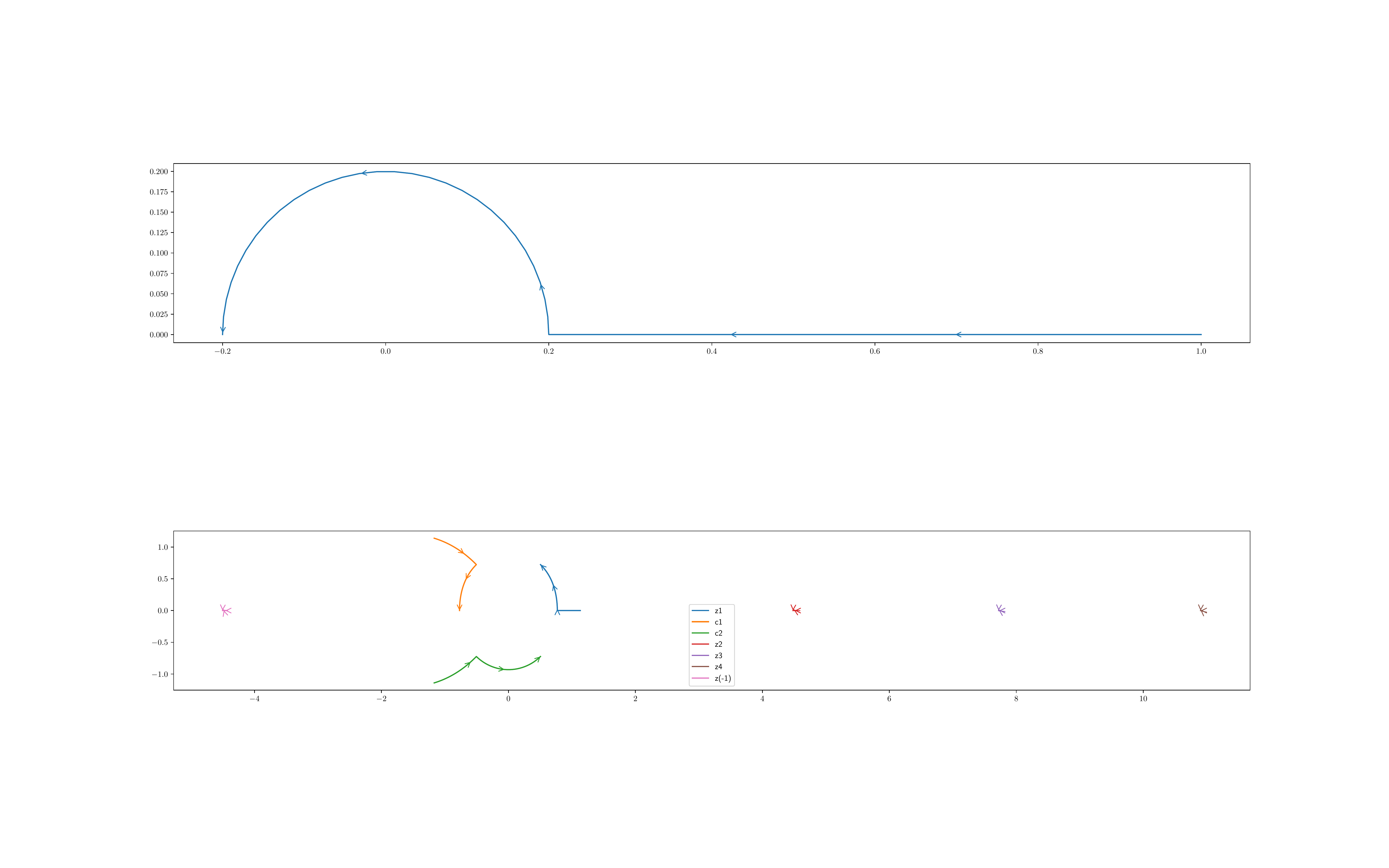}
	\captionof{figure}{The first part of the path}
\end{figure}

\begin{center}
{
\small
In this and the following diagrams, the upper picture is the path drawn by $a$ and the lower one shows the trajectories of the roots.
}
\end{center}

Then it comes up to the $n^{th}$ critical point loops around it in a small circle and comes back along the same path it came up.
Let's see what it does to the roots: when the first critical point is passed, $c_1$ swaps with $z_1$. Then $z_1$ swaps with $z_2$.

\begin{figure}[H]
	\centering
	\includegraphics[width=10cm]{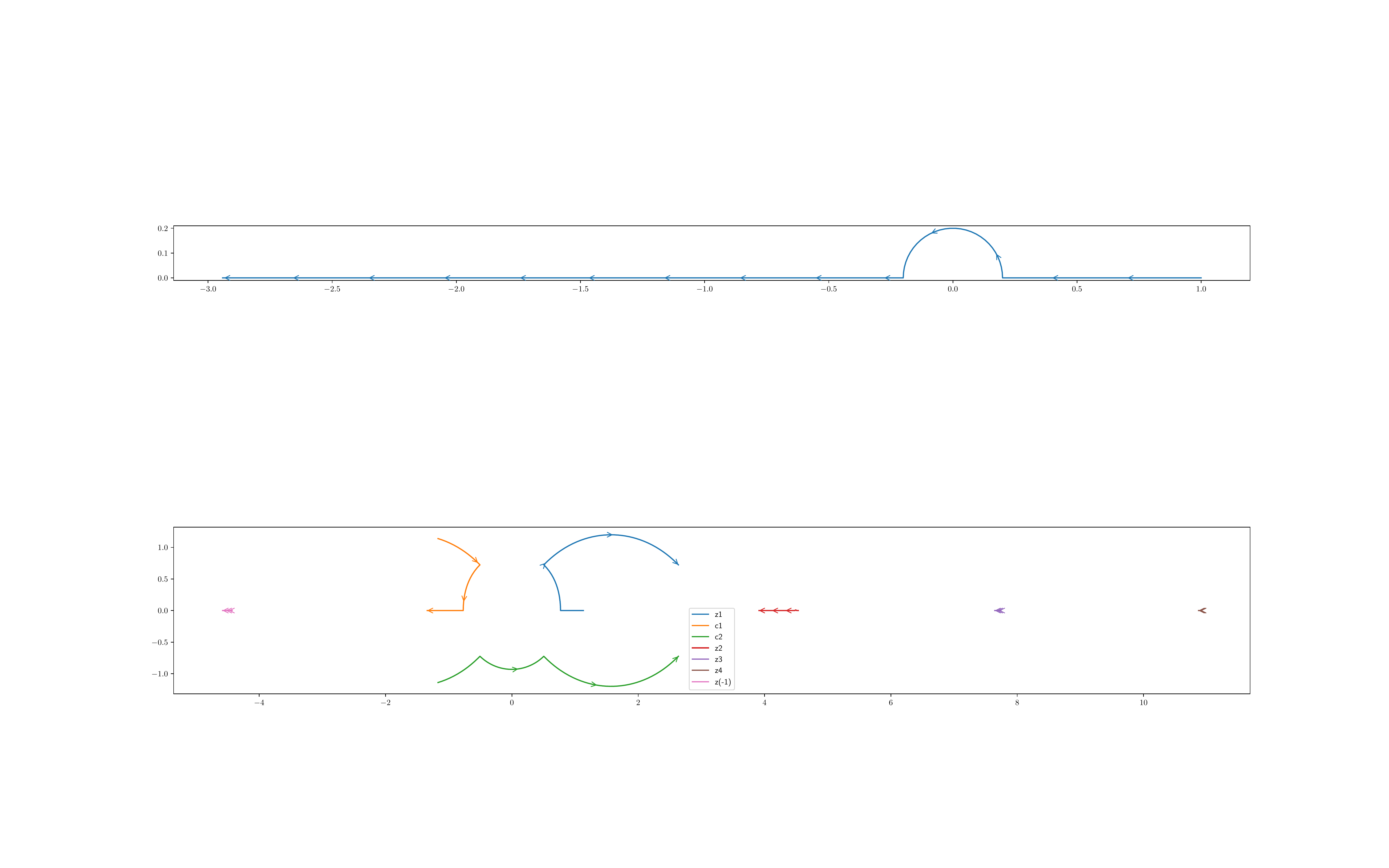}
	\captionof{figure}{The intermediate part of the path}
\end{figure}

Thus, by the time $a$ comes to the $n^{th}$ critical point, the roots will be configured as follows:
$$(c_1, z_1, z_2, \ldots, z_{n-2}, z_n, \ldots)$$

$z_{n-1}$ will be in the upper semiplane, and, as $a$ makes a loop, $z_n$, $z_{n-1}$ and $c_2$ permute cyclically (it can be envisaged as two swaps, because $a$ moves along two semicircles), though we haven't calculated how exactly. But there is no need - we know that the permutation will be non-trivial, and since $A_3$ is generated by any cycle, perhaps making $a$ loop twice, we can create the following configuration:

$$(c_1, z_1, \ldots, z_{n-3}, z_{n-2}, c_2, \ldots)$$
\begin{figure}[H]
	\centering
	\includegraphics[width=10cm]{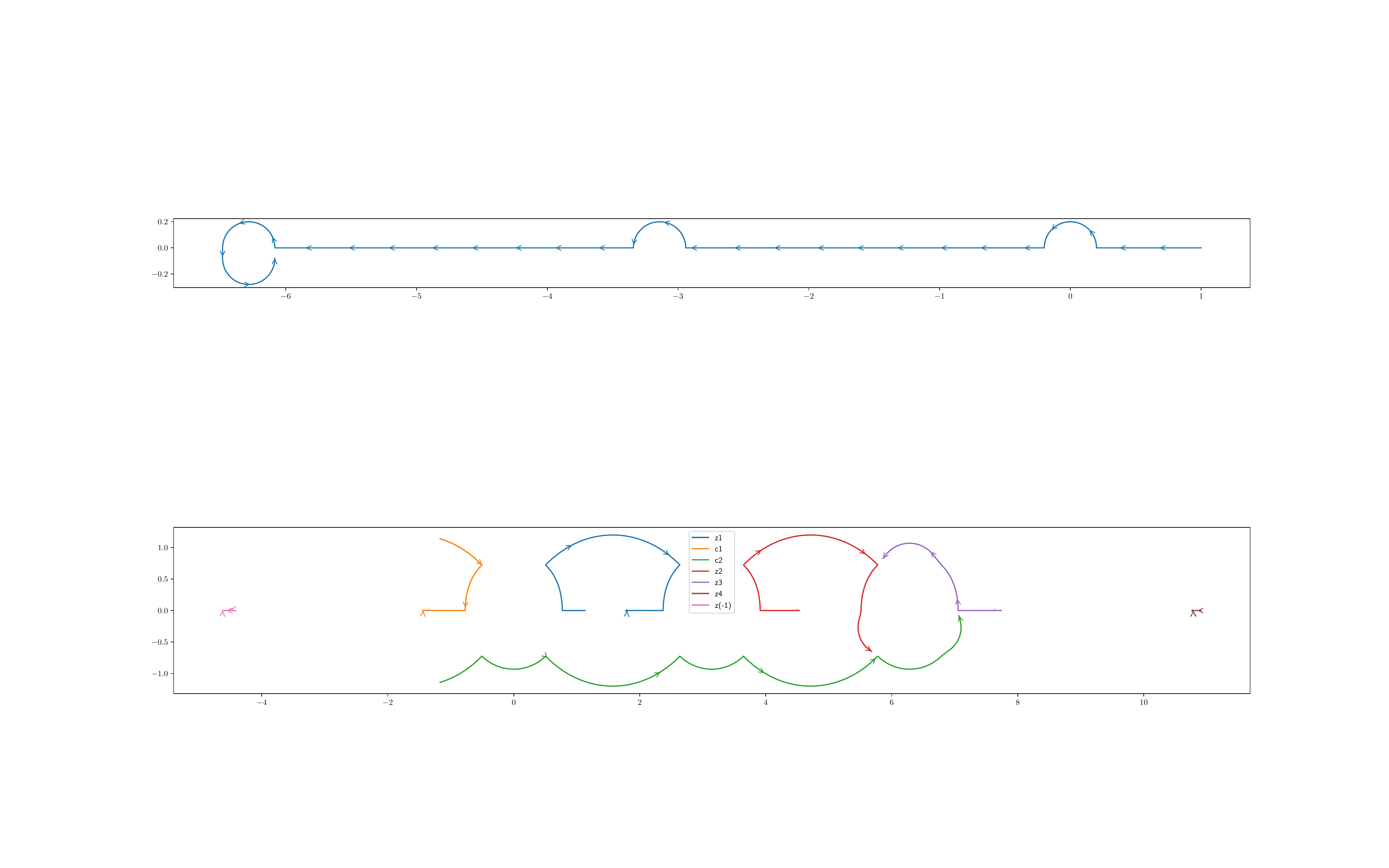}
	\captionof{figure}{The last part of the path}
\end{figure}

Now $z_n$ is hanging in the upper semiplane and $z_{n-1}$ in the lower. Since the path $a$ takes from now is the reverse of the one it took before the loop, all the swaps will be reversed, but since they only swapped two roots at a time, they will simply be the same, swapping the real root with the one in the upper semiplane. Thus, first $z_n$ and $z_{n - 2}$ will be swapped, then $z_{n - 2}$ and $z_{n-3}$ and  so on. Finally, when $a$ completes the path, the configuration will be as such:
$$(z_1, z_2, \ldots, z_{n-2}, z_n, c_2, \ldots)$$
$c_1$ will be back where it was, and $z_{n - 1}$ in the lower semiplane.

\begin{figure}[H]
	\centering
	\includegraphics[width=10cm]{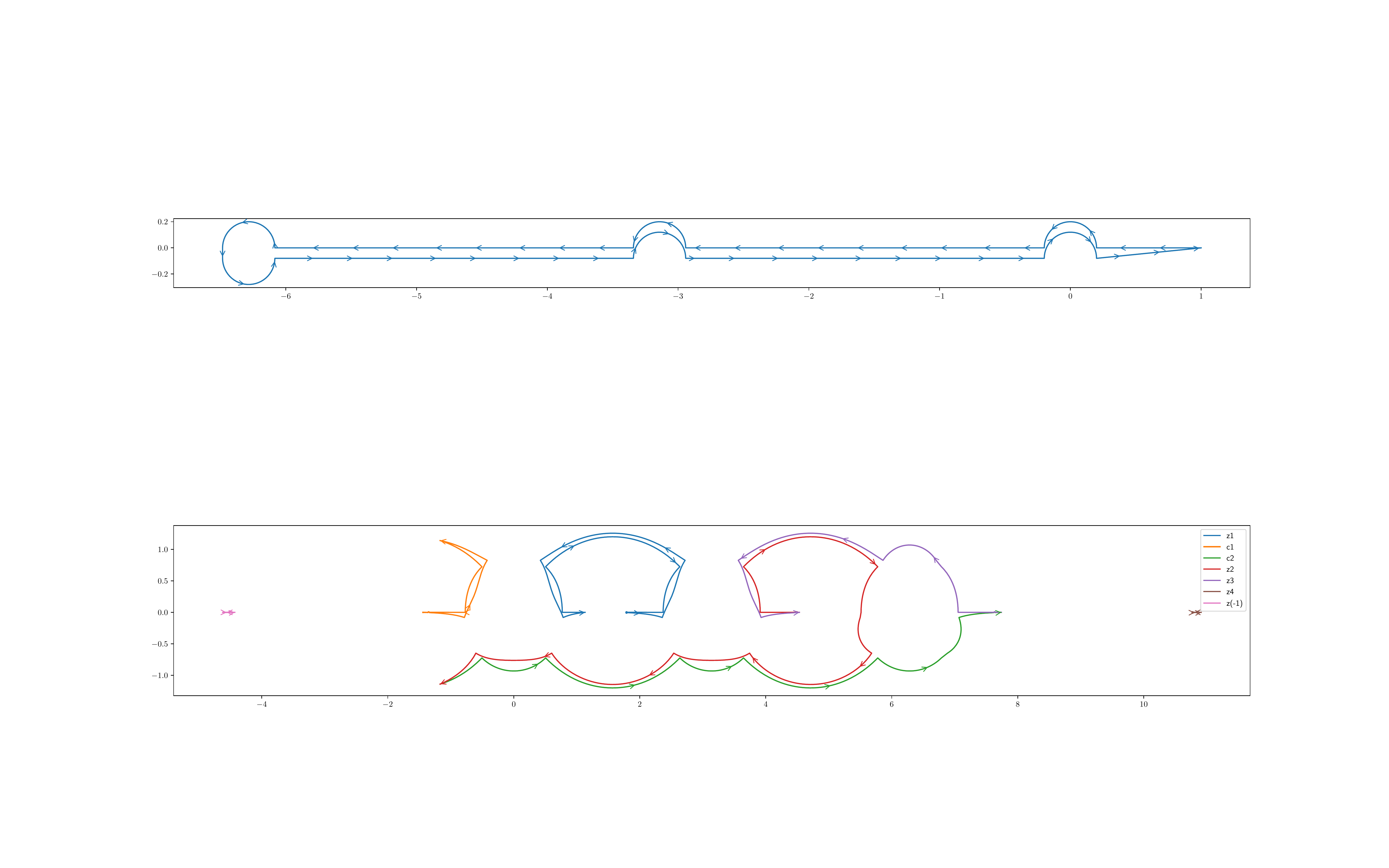}
\end{figure}

If we now do the same loop for the $(n+1)^{th}$ point, the configuration will be
$$(z_1, z_2, \ldots, z_{n - 2}, z_n, z_{n+1}, z_{n-1},\ldots)$$
With both complex roots back where they were. Hence we have a permutation $(n - 1,n+1,n)$. Arather cumbersome calculation (the kind of calculation that is conventionally left as an exercise for the reader with a hint that the alternating group is simple) shows that consecutive 3-cycles generate the alternating group, which is unsolvable. Thus, the equation $\tan(x) - x = a$ is not solvable in elementary functions.

\end{document}